\documentclass[12pt]{article}
\oddsidemargin 0 mm
\topmargin -10 mm
\headheight 0 mm
\headsep 0 mm
\textheight 246.2 mm
\textwidth 159.2 mm
\footskip 9 mm
\setlength{\parindent}{0pt} 
\setlength{\parskip}{5pt plus 2pt minus 1pt}
\pagestyle{plain}
\usepackage{amssymb}
\usepackage{amsthm}
\usepackage{amsmath}
\usepackage{color}
\usepackage[T1]{fontenc}

\newtheorem{definition}{Definition}
\newtheorem{lemma}[definition]{Lemma}

\newtheorem{theorem}[definition]{Theorem}

\newtheorem{example}[definition]{Example}
\newtheorem{corollary}[definition]{Corollary}
\title{Residuated operators in complemented posets\thanks{Preprint of an article accepted for publication in Asian-European Journal of Mathematics, \textcopyright\,World Scientific Publishing Company, \texttt{https://www.worldscientific.com/worldscinet/aejm}}}
\author{Ivan~Chajda and Helmut~L\"anger}
\date{}
\begin{document}
\footnotetext[1]{Support of the research of both authors by \"OAD, project CZ~04/2017, and IGA, project P\v rF~2018~012, and of the second author by the Austrian Science Fund (FWF), project I~1923-N25, is gratefully acknowledged.}
\maketitle
\begin{abstract}
Using the operators of taking upper and lower cones in a poset with a unary operation, we define operators $M(x,y)$ and $R(x,y)$ in the sense of multiplication and residuation, respectively, and we show that by using these operators, a general modification of residuation can be introduced. A relatively pseudocomplemented poset can be considered as a prototype of such an operator residuated poset. As main results we prove that every Boolean poset as well as every pseudo-orthomodular poset can be organized into a (left) operator residuated structure. Some results on pseudo-orthomodular posets are presented which show the analogy to orthomodular lattices and orthomodular posets.
\end{abstract}

{\bf AMS Subject Classification:} 06A11, 06C15, 06D15, 06E75, 03G25

{\bf Keywords:} Operator residuation, operator left adjointness, Boolean poset, relatively pseudocomplemented poset, complemented poset, pseudo-orthomodular poset, modular poset, orthogonal poset

Residuated structures play an important role in the algebraic axiomatization of some non-classical logics, in particular in so-called substructural logics and fuzzy logics. An algebraic theory of residuated structures has been developed in the last decades, see e.g.\ \cite{GJKO} for details. On the contrary, an algebraic semantic of the logic of quantum mechanics is provided by orthomodular lattices and orthomodular posets, see e.g.\ \cite B and the references there. The first attempts to get together residuated structures and so-called quantum structures were made in \cite{CH}. More sophisticated attempts to show that orthomodular lattices can be converted into left residuated structures were done by the authors in \cite{CL17a} and \cite{CL17b}. In fact, it was shown that every orthomodular lattice can be organized into left residuated l-groupoid. In order to extend these investigations to orthomodular posets, a more general approach is necessary. For example, so-called skew residuated lattices were studied in \cite{CK} and, in full generality, the concept of residuation was extended for relational structures in \cite{BC}. However, some researchers decided that the logic of quantum mechanics is based on a bit more general structures than orthomodular posets. Hence, a residuation of so-called weakly orthomodular and dually weakly orthomodular posets (\cite{CL18a}) and lattices (\cite{CL18b} has been investigated by the authors recently. Unfortunately, if an underlying lattice is replaced by a poset, residuation is possible only for certain elements which can be very restrictive, see \cite{CL14}. Hence, we introduce here a new concept, the so-called operator residuation, which seems to be more successful. Namely, we show that every relatively pseudocomplemented poset, every Boolean poset and every pseudo-orthomodular poset is operator residuated. The concept of a pseudo-orthomodular poset introduced here is very general, it includes orthomodular lattices as well as pseudo-Boolean posets and their horizontal sums. On the other hand, our paper does not contain a general theory of operator residuation but the authors believe that it would be an inspiration for other researchers to develop such a theory.

Let $\mathbf P=(P,\leq)$ be a poset. For $M\subseteq P$ we define
\begin{align*}
L(M) & :=\{x\in P\mid x\leq y\text{ for all }y\in M\}, \\
U(M) & :=\{x\in P\mid y\leq x\text{ for all }y\in M\}.
\end{align*}
We write $L(a)$ instead of $L(\{a\})$, $L(a,b)$ instead of $L(\{a,b\})$, $L(M,a)$ instead of $L(M\cup\{a\})$ and $L(M_1,M_2)$ instead of $L(M_1\cup M_2)$. Similarly for $U$. Let $a,b\in P$ and $A,B\subseteq P$. It is easy to show that the following are equivalent: $a\leq b$, $L(a)\subseteq L(b)$, $U(b)\subseteq U(a)$, $a\in L(a,b)$, $b\in U(a,b)$. Moreover, if $A\subseteq B$ then $L(B)\subseteq L(A)$ and $U(B)\subseteq U(A)$. If $a\vee b$ exists then $U(a,b)=U(a\vee b)$, if $a\wedge b$ exists then $L(a,b)=L(a\wedge b)$. 

The {\em poset} $\mathbf P$ is called {\em modular} if it satisfies one of the following equivalent conditions for all $x,y,z\in P$:
\begin{align*}
& x\leq z\text{ implies }L(U(x,y),z)=L(U(x,L(y,z))), \\
& x\leq z\text{ implies }U(x,L(y,z))=U(L(U(x,y),z)).
\end{align*}
Recall from \cite{CR} that the {\em poset} $\mathbf P$ is called {\em distributive} if it satisfies one of the following equivalent identities:
\begin{align*}
L(U(x,y),z) & \approx L(U(L(x,z),L(y,z))), \\
U(L(x,y),z) & \approx U(L(U(x,z),U(y,z))).
\end{align*}
Every distributive poset is modular, see \cite{CR}. If $(P,\leq)$ is a poset, $A\subseteq P$ and $'$ a unary operation on $P$ then we put $A':=\{x'\mid x\in A\}$. A {\em poset with complementation} is an ordered quintuple $\mathbf P=(P,\leq,{}',0,1)$ such that $(P,\leq,0,1)$ is a bounded poset and $'$ is a unary operation on $P$ satisfying the following conditions for all $x,y\in P$:
\begin{enumerate}
\item[(i)] $L(x,x')\approx\{0\}$ and $U(x,x')\approx\{1\}$,
\item[(ii)] $x\leq y$ implies $y'\leq x'$,
\item[(iii)] $(x')'\approx x$.
\end{enumerate}
If $(P,\leq)$ is a poset with a unary operation $'$ which is an antitone involution, i.e.\ which satisfies (ii) and (iii) from above, we can easily check that
\[
(U(x,y))'\approx L(x',y')\text{ and }(L(x,y))'\approx U(x',y'),
\]
which is a version of the De Morgan laws. A {\em Boolean poset} is a distributive poset with complementation.

\begin{example}
Fig.\ 1 shows two Boolean posets which are not lattices.

\vspace*{1.4mm}

\[
\setlength{\unitlength}{7mm}
\begin{picture}(6,8)
\put(3,0){\circle*{.3}}
\put(0,2){\circle*{.3}}
\put(2,2){\circle*{.3}}
\put(4,2){\circle*{.3}}
\put(6,2){\circle*{.3}}
\put(0,4){\circle*{.3}}
\put(2,4){\circle*{.3}}
\put(4,4){\circle*{.3}}
\put(6,4){\circle*{.3}}
\put(0,6){\circle*{.3}}
\put(2,6){\circle*{.3}}
\put(4,6){\circle*{.3}}
\put(6,6){\circle*{.3}}
\put(3,8){\circle*{.3}}
\put(3,0){\line(-3,2)3}
\put(3,0){\line(-1,2)1}
\put(3,0){\line(1,2)1}
\put(3,0){\line(3,2)3}
\put(3,8){\line(-3,-2)3}
\put(3,8){\line(-1,-2)1}
\put(3,8){\line(1,-2)1}
\put(3,8){\line(3,-2)3}
\put(0,2){\line(0,1)4}
\put(6,2){\line(0,1)4}
\put(0,4){\line(1,1)2}
\put(0,2){\line(1,1)4}
\put(2,2){\line(1,1)4}
\put(4,2){\line(1,1)2}
\put(2,2){\line(-1,1)2}
\put(4,2){\line(-1,1)4}
\put(6,2){\line(-1,1)4}
\put(6,4){\line(-1,1)2}
\put(2.85,-.75){$0$}
\put(-.6,1.9){$a$}
\put(1.2,1.9){$b$}
\put(4.45,1.9){$c$}
\put(6.4,1.9){$d$}
\put(-.6,3.9){$e$}
\put(1.2,3.9){$f$}
\put(4.45,3.9){$f'$}
\put(6.4,3.9){$e'$}
\put(-.7,5.9){$d'$}
\put(1.2,5.9){$c'$}
\put(4.45,5.9){$b'$}
\put(6.4,5.9){$a'$}
\put(2.85,8.4){$1$}
\put(2.6,-1.5){$(a)$}
\end{picture}
\quad\quad\quad\quad
\setlength{\unitlength}{7mm}
\begin{picture}(6,8)
\put(3,0){\circle*{.3}}
\put(0,2){\circle*{.3}}
\put(2,2){\circle*{.3}}
\put(4,2){\circle*{.3}}
\put(6,2){\circle*{.3}}
\put(0,4){\circle*{.3}}
\put(6,4){\circle*{.3}}
\put(0,6){\circle*{.3}}
\put(2,6){\circle*{.3}}
\put(4,6){\circle*{.3}}
\put(6,6){\circle*{.3}}
\put(3,8){\circle*{.3}}
\put(3,0){\line(-3,2)3}
\put(3,0){\line(-1,2)1}
\put(3,0){\line(1,2)1}
\put(3,0){\line(3,2)3}
\put(3,8){\line(-3,-2)3}
\put(3,8){\line(-1,-2)1}
\put(3,8){\line(1,-2)1}
\put(3,8){\line(3,-2)3}
\put(0,2){\line(0,1)4}
\put(6,2){\line(0,1)4}
\put(0,4){\line(1,1)2}
\put(0,2){\line(1,1)4}
\put(2,2){\line(1,1)4}
\put(4,2){\line(1,1)2}
\put(2,2){\line(-1,1)2}
\put(4,2){\line(-1,1)4}
\put(6,2){\line(-1,1)4}
\put(6,4){\line(-1,1)2}
\put(2.85,-.75){$0$}
\put(-.6,1.9){$a$}
\put(1.2,1.9){$b$}
\put(4.45,1.9){$c$}
\put(6.4,1.9){$d$}
\put(-.6,3.9){$e$}
\put(6.4,3.9){$e'$}
\put(-.7,5.9){$d'$}
\put(1.2,5.9){$c'$}
\put(4.45,5.9){$b'$}
\put(6.4,5.9){$a'$}
\put(2.85,8.4){$1$}
\put(2.6,-1.5){$(b)$}
\put(-2,-2){{\rm Fig.\ 1}}
\end{picture}
\]

\vspace*{9mm}

\end{example}

Recall from \cite{CL17a} or \cite{GJKO} that a {\em left residuated lattice} is an algebra $(L,\vee,\wedge,\odot,\rightarrow,1)$ of type $(2,2,2,2,0)$ satisfying the following conditions for all $x,y,z\in L$:
\begin{itemize}
\item $(L,\vee,\wedge)$ is a lattice,
\item $x\odot1\approx1\odot x\approx x$,
\item $x\odot y\leq z$ if and only if $x\leq y\rightarrow z$.
\end{itemize}
The last property is called {\em left adjointness}. If $\odot$ is commutative then this condition is called simply {\em adjointness}.

It was shown by the authors in \cite{CL17a} and \cite{CL17b} that every {\em orthomodular lattice} (see e.g.\ \cite B) can be converted into a left residuated lattice. A similar result was obtained by the authors for weakly orthomodular and dually weakly orthomodular lattices in \cite{CL18b}. Here we can define
\begin{align*}
      x\odot y & :=(x\vee y')\wedge y, \\
x\rightarrow y & :=(x\wedge y)\vee x'.
\end{align*}
In the case of Boolean algebras this reduces to
\begin{align*}
      x\odot y & :=x\wedge y, \\
x\rightarrow y & :=x'\vee y.
\end{align*}
Several attempts to convert Boolean or orthomodular posets into left residuated structures were made e.g.\ in \cite{CH} or \cite{CL14}, but in these cases left adjointness holds only for elements satisfying additional assumptions.

If we introduce the operations $\odot$ and $\rightarrow$ in Boolean posets in the same way as it was done in the case of Boolean algebras, i.e.\ $x\odot y:=x\wedge y$ and $x\rightarrow y:=x'\vee y$ if the corresponding meet and join exists, then the left adjointness property need not hold as the following example shows:

\begin{example}
Consider the Boolean poset depicted in Fig.\ 1 (a). Define $x\odot y:=x\wedge y$ and $x\rightarrow y:=x'\vee y$ whenever this meet and join exists. Assume that left adjointness is satisfied. Then
\begin{align*}
& b'\leq c'\rightarrow b'\text{ and hence }b'\odot c'\leq b', \\
& b'\leq c'\rightarrow c'\text{ and hence }b'\odot c'\leq c'.
\end{align*}
From $b'\odot c'\leq b'$ and $b'\odot c'\leq c'$ we conclude $b'\odot c'\in\{0,a,d\}$.
\begin{align*}
& \text{If }b'\odot c'=0\text{ then }b'\leq c'\rightarrow0=c,\text{ a contradiction}, \\
& \text{if }b'\odot c'=a\text{ then }b'\leq c'\rightarrow a=f,\text{ a contradiction}, \\
& \text{if }b'\odot c'=d\text{ then }b'\leq c'\rightarrow d=e',\text{ a contradiction}.
\end{align*}
Hence, we cannot go on in this way with partially defined operations $\odot$ and $\rightarrow$ and expect that the complemented poset can be converted into a left residuated structure. This is why we change our approach.
\end{example}

Since a poset with complementation has no binary operations one can hardly assume that it is possible to express the binary operations $\odot$ and $\rightarrow$ from left adjointness as term operations of $(P,\leq,{}',0,1)$. On the other hand, in every poset $(P,\leq)$ there are defined the s $L$ and $U$. Thus we can replace the operations $\odot$ and $\rightarrow$ by other s which express multiplication and residuation, respectively. We proceed as follows:

\begin{definition}
An {\em operator left residuated poset} is an ordered seventuple $\mathbf P=(P,\leq,{}',M,R,0,$ $1)$ where $(P,\leq,{}',0,1)$ is a bounded poset with a unary operation and $M$ and $R$ are mappings from $P^2$ to $2^P$ satisfying the following conditions for all $x,y,z\in P$:
\begin{eqnarray}
& & M(x,1)\approx M(1,x)\approx L(x),\label{equ1} \\
& & R(x,y)=P\text{ if and only if }x\leq y,\label{equ2} \\
& & M(x,y)\subseteq L(z)\text{ if and only if }L(x)\subseteq R(y,z),\label{equ3} \\
& & R(x,0)\approx L(x').
\end{eqnarray}
\end{definition}

Condition (\ref{equ3}) is clearly a generalization of {\em left-adjointness} and by (\ref{equ2}), $(P,\leq)$ can be reconstructed from $R$. If $M$ is commutative, i.e.\ $M(x,y)\approx M(y,x)$, then condition (\ref{equ3}) is simply called {\em adjointness} and we call $\mathbf P$ an {\em operator residuated poset}.

We can easily extend the operators $M$ and $R$ from $P^2$ to $(2^P)^2$, namely for $A,B\subseteq P$ we define
\[
M(A,B):=\bigcup_{(x,y)\in A\times B}M(x,y)\text{ and }R(A,B):=\bigcup_{(x,y)\in A\times B}R(x,y).
\]

We can now state the following lemma:

\begin{lemma}\label{lem1}
Every operator left residuated poset $(P,\leq,{}',M,R,0,1)$ with an antitone involution $'$ satisfies the identity $R(R(x,0),0)\approx P$.
\end{lemma}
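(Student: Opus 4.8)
The plan is to unfold the left-hand side using only condition (\ref{equ2}), the identity $R(x,0)\approx L(x')$, and the definition of the extended operator $R$ on $(2^P)^2$. First I would rewrite the inner occurrence: by $R(x,0)\approx L(x')$ the assertion to be proved becomes $R\bigl(L(x'),\{0\}\bigr)\approx P$ for every $x\in P$, where in the outer position the element $0$ is read, via the extension, as the singleton $\{0\}$.

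The key observation is then that $L(x')$ always contains the bottom element, since $0\le x'$ holds for every $x\in P$. Consequently, applying the definition $R(A,B)=\bigcup_{(u,v)\in A\times B}R(u,v)$ with $A=L(x')$ and $B=\{0\}$, the set $R\bigl(L(x'),\{0\}\bigr)$ contains $R(0,0)$ as a subset.

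Finally I would evaluate $R(0,0)$. By condition (\ref{equ2}) one has $R(u,v)=P$ precisely when $u\le v$; taking $u=v=0$ gives $R(0,0)=P$. (Alternatively, $R(x,0)\approx L(x')$ gives $R(0,0)=L(0')$, and since $'$ is an antitone involution and $0$ is the least element, $0'$ is the greatest element, i.e.\ $0'=1$, so $L(0')=L(1)=P$.) Combining the two steps yields $P=R(0,0)\subseteq R\bigl(L(x'),\{0\}\bigr)\subseteq P$, hence $R(R(x,0),0)\approx P$.

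There is no genuine obstacle here; the only point that needs a little care is the bookkeeping for the extended operator, namely that substituting a subset into the first coordinate of $R$ amounts to taking a union over its members, so it suffices to isolate a single member --- here the bottom element $0\in L(x')$ --- whose image $R(0,0)$ already exhausts $P$.
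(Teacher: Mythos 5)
Your proposal is correct, but it takes a genuinely different route from the paper's proof. The paper unfolds the whole union: $R(R(x,0),0)\approx R(L(x'),0)\approx\bigcup_{y\in L(x')}R(y,0)\approx\bigcup_{y\in L(x')}L(y')$, then re-indexes this union via the antitone involution ($y\le x'$ iff $x\le y'$) as $\bigcup_{z\in U(x)}L(z)$, which equals $P$ because $1\in U(x)$ and $L(1)=P$. You instead isolate a single witness: since $0\in L(x')$, the union $\bigcup_{y\in L(x')}R(y,0)$ contains $R(0,0)$, and $R(0,0)=P$ follows immediately from condition (\ref{equ2}) applied to $0\le 0$. Your argument is shorter, and it has the notable feature of not using the antitone involution hypothesis at all (nor the fact that $0'=1$, in your main line of reasoning): it establishes the identity for every operator left residuated poset, so the hypothesis in the lemma is actually superfluous under your approach. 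What the paper's longer computation buys is an explicit description of the set $R(R(x,0),0)$ as $\bigcup_{z\in U(x)}L(z)$ before collapsing it to $P$, which makes the role of the involution transparent; what yours buys is economy and greater generality. Both are complete and correct.
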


\begin{proof}
We compute
\begin{align*}
R(R(x,0),0) & \approx R(L(x'),0)\approx\bigcup_{y\in L(x')}R(y,0)\approx\bigcup_{y\in L(x')}L(y')\approx\bigcup_{y'\in U(x)}L(y')\approx\bigcup_{z\in U(x)}L(z)\approx \\
            & \approx P.
\end{align*}
\end{proof}

Further, in residuated lattices the identity $(x\rightarrow y)\odot x\approx x\wedge y$ is usually called {\em divisibility}. Its modification for residuated posets could be as follows:
\begin{equation}\label{equ5}
M(R(x,y),x)\approx L(x,y).
\end{equation}
In the following, this identity will be referred to as {\em divisibility}.

A prototype of an operator residuated poset is a relatively pseudocomplemented one. Recall that a poset $(P,\leq)$ is called {\em relatively pseuducomplemented} if for each $a,b\in P$ there exists a greatest element $c$ of $P$ satisfying $L(a,c)\subseteq L(b)$, see e.g.\ \cite{Cu}. This element $c$ is called the {\em relative pseudocomplement} of $a$ with respect to $b$ and it is denoted by $a*b$. Every relative pseudocomplemented poset has a greatest element $1$ since $x*x=1$ for every $x\in P$.

\begin{theorem}\label{th2}
Let $\mathbf P=(P,\leq,*,0,1)$ be a bounded relatively pseudocomplemented poset. Define
\begin{align*}
M(x,y) & :=L(x,y), \\
R(x,y) & :=L(x*y)
\end{align*}
for all $x,y\in P$. Then $(P,\leq,^*,M,R,0,1)$ is an operator residuated poset satisfying divisibility. {\rm(}Here, for each $x\in P$, the element $x^*:=x*0$ denotes the {\em pseudocomplement} of $x$.{\rm)}
\end{theorem}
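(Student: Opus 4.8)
The plan is to reduce every claim to the single defining property of the relative pseudocomplement, namely the equivalence
\[
u\leq a*b\quad\Longleftrightarrow\quad L(a,u)\subseteq L(b)\qquad(a,b,u\in P).
\]
The direction ``$\Leftarrow$'' is immediate since $a*b$ is by definition the \emph{greatest} $c$ with $L(a,c)\subseteq L(b)$. For ``$\Rightarrow$'', from $u\leq a*b$ we get $L(u)\subseteq L(a*b)$, hence $L(a,u)=L(a)\cap L(u)\subseteq L(a)\cap L(a*b)=L(a,a*b)\subseteq L(b)$, the last inclusion again being the definition of $a*b$. I would also use the two elementary facts already recorded in the text: $L(c)=P$ iff $c=1$, and $L(a)\subseteq L(b)$ iff $a\leq b$.

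With these in hand the four defining conditions are routine. For (\ref{equ1}), since $1$ is the top element $L(x,1)=L(x)\cap L(1)=L(x)=L(1,x)$, so $M(x,1)=M(1,x)=L(x)$. For (\ref{equ2}), $R(x,y)=L(x*y)=P$ iff $x*y=1$ iff $L(x,1)\subseteq L(y)$ (the key equivalence with $u=1$) iff $L(x)\subseteq L(y)$ iff $x\leq y$. For (\ref{equ3}), using $M(x,y)=L(x,y)=L(y,x)$ together with the key equivalence applied to $a=y$, $u=x$, $b=z$, we obtain $M(x,y)\subseteq L(z)$ iff $L(y,x)\subseteq L(z)$ iff $x\leq y*z$ iff $L(x)\subseteq L(y*z)=R(y,z)$. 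The remaining identity $R(x,0)\approx L(x')$ holds by the very definition $R(x,0)=L(x*0)=L(x^*)$. Finally, since $M(x,y)=L(x,y)=L(y,x)=M(y,x)$, the operator $M$ is commutative, so $\mathbf P$ is in fact an operator residuated poset, not merely a left one.

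Only divisibility (\ref{equ5}) uses the extension of $M$ to $(2^P)^2$: by definition $M(R(x,y),x)=\bigcup_{u\in L(x*y)}L(u,x)$. For ``$\subseteq$'', if $u\leq x*y$ then $L(u,x)=L(u)\cap L(x)\subseteq L(x*y)\cap L(x)=L(x,x*y)$, and $L(x,x*y)\subseteq L(x)\cap L(y)=L(x,y)$ because $L(x,x*y)\subseteq L(y)$ is the defining property of $x*y$. For ``$\supseteq$'', note that $x*y\in L(x*y)$, so $L(x*y,x)=L(x,x*y)\subseteq M(R(x,y),x)$; since moreover $L(x,y)\subseteq L(x,x*y)$ (indeed any $w\in L(x,y)$ has $w\leq y$, so $L(x,w)\subseteq L(w)\subseteq L(y)$, whence $w\leq x*y$ by the key equivalence, giving $w\in L(x)\cap L(x*y)$), we conclude $L(x,y)\subseteq M(R(x,y),x)$. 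Hence $M(R(x,y),x)=L(x,y)$.

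I do not anticipate a genuine obstacle: the proof is an unwinding of definitions once the equivalence $u\leq a*b\Leftrightarrow L(a,u)\subseteq L(b)$ is isolated. The two places needing a little care are the direction $u\leq a*b\Rightarrow L(a,u)\subseteq L(b)$, where one must pass through the monotonicity step $L(a,u)\subseteq L(a,a*b)$ rather than invoke the definition of $a*b$ directly, and, in divisibility, keeping track of the fact that the $M$ on the left-hand side is the set-extended operator, so that the asserted equality is genuinely an equality of unions of lower cones rather than of single lower cones.
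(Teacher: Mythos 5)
Your proposal is correct and follows essentially the same route as the paper: verify conditions (1)--(4) directly from the defining property of the relative pseudocomplement, and establish divisibility by showing the union $\bigcup_{u\in L(x*y)}L(u,x)$ equals $L(x,y)$ via both inclusions. Your only real refinement is to isolate the equivalence $u\leq a*b\Leftrightarrow L(a,u)\subseteq L(b)$ as an explicit lemma (the paper uses it tacitly in steps (3) and (5)), which makes the argument cleaner but not different in substance.
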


\begin{proof}
Let $a,b,c\in P$. \\
(1) We have $M(x,1)\approx L(x,1)\approx L(x)$ and $M(1,x)\approx L(1,x)\approx L(x)$. \\
(2) The following are equivalent: $R(a,b)=P$, $L(a*b)=L(1)$, $a*b=1$, $a\leq b$. \\
(3) The following are equivalent: $M(a,b)\subseteq L(c)$, $L(a,b)\subseteq L(c)$, $L(b,a)\subseteq L(c)$, $a\leq b*c$, $L(a)\subseteq L(b*c)$, $L(a)\subseteq R(b,c)$. \\
(4) We have $R(x,0)\approx L(x*0)\approx L(x^*)$. \\
(5) We have
\[
M(R(a,b),a)=\bigcup_{x\in R(a,b)}M(x,a)=\bigcup_{x\in L(a*b)}L(x,a).
\]
If $c\in\bigcup\limits_{x\in L(a*b)}L(x,a)$ then there exists some $d\in L(a*b)$ with $c\in L(d,a)$. Hence $L(a,d)\subseteq L(b)$ and $c\in L(d,a)$ whence $c\in L(a,b)$. If, conversely, $c\in L(a,b)$ then $L(a,c)\subseteq L(b)$ whence $c\in L(a*b,a)\subseteq\bigcup\limits_{x\in L(a*b)}L(x,a)$. This shows
\[
\bigcup_{x\in L(a*b)}L(x,a)=L(a,b)
\]
and hence
\[
M(R(a,b),a)=L(a,b).
\]
Since $M$ is commutative, we have operator adjointness.
\end{proof}

In the case of Boolean posets the operator $R(x,y)$ is constructed in a different way, see the next result.

\begin{theorem}\label{th1}
Let $(P,\leq,{}',0,1)$ be a Boolean poset and define
\begin{align*}
M(x,y) & :=L(x,y), \\
R(x,y) & :=L(U(x',y))
\end{align*}
for all $x,y\in P$. Then $(P,\leq,{}',M,R,0,1)$ is an operator residuated poset satisfying divisibility.
\end{theorem}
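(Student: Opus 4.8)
The plan is to check, one by one, the four defining conditions of an operator left residuated poset for the given $M(x,y):=L(x,y)$ and $R(x,y):=L(U(x',y))$, then observe that $M$ is commutative (since $L(x,y)=L(y,x)$), which upgrades the structure to an operator residuated poset, and finally verify divisibility \eqref{equ5}. Condition \eqref{equ1} is immediate: $M(x,1)=L(x,1)=L(x)$ because $x\leq1$, and likewise $M(1,x)=L(x)$. The fourth condition $R(x,0)\approx L(x')$ is also immediate: $R(x,0)=L(U(x',0))=L(U(x'))=L(x')$, since adjoining the bottom element $0$ adds no constraint on upper bounds and $L(U(a))=L(a)$ for every $a$.

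The real content of the proof lives in two small lemmas about Boolean posets that I would isolate first. The \emph{workhorse identity} is $L(U(x',y),x)\approx L(x,y)$: apply distributivity to get $L(U(x',y),x)=L(U(L(x',x),L(y,x)))$, then use $L(x,x')=\{0\}$ together with the facts $U(\{0\}\cup S)=U(S)$ and $L(U(L(S)))=L(S)$ to collapse the right-hand side to $L(y,x)$. The \emph{auxiliary claim} is that $L(a,b')=\{0\}$ implies $a\leq b$ in a Boolean poset: write $L(a)=L(a,1)=L(U(b,b'),a)=L(U(L(a,b),L(a,b')))$ by distributivity (using $U(b,b')=\{1\}$), and since $L(a,b')=\{0\}$ this reduces to $L(a,b)$; hence $a\in L(a,b)\subseteq L(b)$.

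With these in hand the remaining conditions are short. For \eqref{equ2}: if $x\leq y$ then any common upper bound $u$ of $x'$ and $y$ satisfies $u\geq x$ and $u\geq x'$, so $u\in U(x,x')=\{1\}$; thus $U(x',y)=\{1\}$ and $R(x,y)=L(1)=P$. Conversely, $R(x,y)=P$ forces $x\in L(U(x',y))$, so $L(x)\subseteq L(U(x',y))$, and intersecting with $L(x)$ and invoking the workhorse identity gives $L(x)=L(U(x',y),x)=L(x,y)\subseteq L(y)$, i.e.\ $x\leq y$. For \eqref{equ3}: the right side $L(x)\subseteq R(y,z)=L(U(y',z))$ just says $x\in L(U(y',z))$; if this holds then $M(x,y)=L(x)\cap L(y)\subseteq L(U(y',z))\cap L(y)=L(U(y',z),y)=L(y,z)\subseteq L(z)$ by the workhorse identity with relabelled variables. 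For the converse, assume $L(x,y)\subseteq L(z)$ and take any $u\in U(y',z)$; then $u'\leq y$ and $u'\leq z'$, so $L(x,u')\subseteq L(x,y)\subseteq L(z)$ and also $L(x,u')\subseteq L(u')$, whence $L(x,u')\subseteq L(z,u')\subseteq L(z,z')=\{0\}$, and the auxiliary claim yields $x\leq u$; thus $x\in L(U(y',z))$. Finally, divisibility: $M(R(x,y),x)=\bigcup_{w\in L(U(x',y))}L(w,x)$; each such $w$ lies in the down-set $L(U(x',y))$, so $L(w,x)\subseteq L(U(x',y),x)=L(x,y)$, and for the reverse inclusion, given $c\in L(x,y)$ one takes $w=c$: $c\leq y$ forces $c\in L(U(x',y))$ (every upper bound of $x'$ and $y$ dominates $y$ hence $c$), and $c\in L(c,x)$, so $c\in M(R(x,y),x)$.

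The main obstacle, and the only point where the Boolean structure is genuinely needed rather than formal $L/U$-bookkeeping, is the converse direction of \eqref{equ3} (and, implicitly, of \eqref{equ2}): converting the set-theoretic hypothesis $L(x,y)\subseteq L(z)$ into the pointwise conclusion $x\leq u$ for \emph{every} common upper bound $u$ of $y'$ and $z$. This is exactly where distributivity and complementation must be used together, packaged in the auxiliary claim $L(a,b')=\{0\}\Rightarrow a\leq b$; once that is established, everything else is routine manipulation of the operators $L$ and $U$.
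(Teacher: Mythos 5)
Your proof is correct, and its overall skeleton (verify the four axioms, note commutativity of $M$, then check divisibility) matches the paper's; conditions (1), (4), the forward direction of (2), the converse direction of (3), and divisibility are handled essentially as in the paper, all resting on the distributivity computation $L(U(x',y),x)\approx L(U(L(x',x),L(y,x)))\approx L(x,y)$ that you isolate as your ``workhorse identity.'' Where you genuinely diverge is in the forward direction of adjointness (and, similarly, the converse of (2)). The paper proves $M(a,b)\subseteq L(c)\Rightarrow L(a)\subseteq R(b,c)$ by a single global chain: expand $L(a)=L(U(L(b,a),L(b',a)))$ via distributivity, replace $L(b,a)$ by the larger $L(c)$, and contract back to $L(U(b',c))$. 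You instead argue pointwise: for each $u\in U(y',z)$ you show $L(x,u')=\{0\}$ and then invoke your auxiliary claim that $L(a,b')=\{0\}$ implies $a\leq b$ in a Boolean poset, concluding $x\leq u$ for every such $u$. Both routes use exactly the same ingredients (distributivity plus $L(x,x')=\{0\}$, $U(x,x')=\{1\}$), but your decomposition makes explicit \emph{where} complementation enters --- namely in the cancellation principle $L(a,b')=\{0\}\Rightarrow a\leq b$ --- at the cost of an elementwise quantification over $U(y',z)$ that the paper's one-line distributivity manipulation avoids. Your treatment of the converse of (2) via the workhorse identity is also slightly cleaner than the paper's dual computation $b\in U(L(U(a,b),U(a',b)))=\dots=U(a,b)$. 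All steps check out; there is no gap.
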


\begin{proof}
Let $a,b,c\in P$. \\
(1) is evident. \\
(2) If $a\leq b$ then $R(a,b)=L(U(a',b))\supseteq L(U(a',a))=L(1)=P$. If, conversely, $R(a,b)=P$ then $U(a',b)=\{1\}$ which implies
\begin{align*}
b\in U(0,b) & =U(L(a,a'),b)=U(L(U(a,b),U(a',b)))=U(L(U(a,b),1))= \\
            & =U(L(U(a,b)))=U(a,b),
\end{align*}
i.e.\ $a\leq b$. \\
(3) If $M(a,b)\subseteq L(c)$ then
\begin{align*}
L(a) & =L(1,a)=L(U(b,b'),a)=L(U(L(b,a),L(b',a)))\subseteq L(U(L(c),L(b',a)))\subseteq \\
     & \subseteq L(U(b',c))=R(b,c).
\end{align*}
If, conversely, $L(a)\subseteq R(b,c)$ then
\begin{align*}
M(a,b) & =L(a,b)\subseteq L(U(b',c),b)=L(U(L(b',b),L(c,b)))=L(U(0,L(c,b)))= \\
       & =L(U(L(c,b)))=L(c,b)\subseteq L(c).
\end{align*}
(4) We have $R(x,0)\approx L(U(x',0))\approx L(U(x'))\approx L(x')$. \\
(5 ) We have
\begin{align*}
M(R(x,y),x) & \approx M(L(U(x',y)),x)\approx\bigcup_{z\in L(U(x',y))}M(z,x)\approx\bigcup_{z\in L(U(x',y))}L(z,x)\approx \\
            & \approx L(U(x',y),x)\approx L(U(L(x',x),L(y,x)))\approx L(U(0,L(x,y)))\approx \\
            & \approx L(U(L(x,y)))\approx L(x,y).
\end{align*}
\end{proof}

It is worth noticing that in the case that $'$ is not a complementation, Theorem~\ref{th1} does not longer hold as can be seen from the following example:

\begin{example}
In the bounded distributive poset depicted in Fig.\ 2:
\[
\setlength{\unitlength}{7mm}
\begin{picture}(4,8)
\put(2,1){\circle*{.3}}
\put(1,3){\circle*{.3}}
\put(3,3){\circle*{.3}}
\put(1,5){\circle*{.3}}
\put(3,5){\circle*{.3}}
\put(2,7){\circle*{.3}}
\put(2,1){\line(-1,2)1}
\put(2,1){\line(1,2)1}
\put(1,3){\line(0,1)2}
\put(1,3){\line(1,1)2}
\put(3,3){\line(-1,1)2}
\put(3,3){\line(0,1)2}
\put(2,7){\line(-1,-2)1}
\put(2,7){\line(1,-2)1}
\put(1.85,.3){$0$}
\put(.4,2.85){$a$}
\put(3.3,2.85){$b$}
\put(.4,4.85){$c$}
\put(3.3,4.85){$d$}
\put(1.85,7.35){$1$}
\put(1.25,-.7){{\rm Fig.\ 2}}
\end{picture}
\]

\vspace*{5mm}

with
\[
\begin{array}{c|cccccc}
 x & 0 & a & b & c & d & 1 \\
\hline
x' & 1 & d & c & b & a & 0
\end{array}
\]
we have
\begin{align*}
M(c,d) & =L(c,d)=\{0,a,b\}\subseteq\{0,a,b,d\}=L(d), \\
  L(c) & =\{0,a,b,c\}\not\subseteq\{0,a,b,d\}=L(\{d,1\})=L(U(a,d))=L(U(d',d))=R(d,d)
\end{align*}
contradicting the operator adjointness property. Hence, it cannot be organized into an operator residuated poset in this way. On the other hand, this poset is relatively pseudocomplemented, the table for relative complementation is as follows:
\[
\begin{array}{c|cccccc}
* & 0 & a & b & c & d & 1 \\
\hline
0 & 1 & 1 & 1 & 1 & 1 & 1 \\
a & b & 1 & b & 1 & 1 & 1 \\
b & a & a & 1 & 1 & 1 & 1 \\
c & 0 & a & b & 1 & d & 1 \\
d & 0 & a & b & c & 1 & 1 \\
1 & 0 & a & b & c & d & 1
\end{array}
\]
Hence, by Theorem~\ref{th2}, it can be converted into operator residuated poset by using the residuation operator $R(x,y)=L(x*y)$.
\end{example}

The following concepts turn out to be useful for our investigations.

An {\em orthogonal poset} (cf.\ \cite{Cha}) is a poset $(P,\leq,{}',0,1)$ with complementation satisfying the following condition for all $x,y\in P$:
\[
\text{if }x\leq y'\text{ then }x\vee y\text{ exists}.
\]
Hence, if $x\leq y$ then $x\vee y'$ exists and, using De Morgan laws, also $y\wedge(x\vee y')=(y'\vee(x\vee y')')'$ exists. An {\em orthomodular poset} (cf. \cite B) is an orthogonal poset $(P,\leq,{}',0,1)$ satisfying one of the following equivalent identities ({\em orthomodular laws}):
\begin{align*}
((x\wedge y)\vee y')\wedge y & \approx x\wedge y, \\
  ((x\vee y)\wedge y')\vee y & \approx x\vee y.
\end{align*}
An example of an orthogonal poset which is not orthomodular is depicted in Fig.\ 3:
\[
\setlength{\unitlength}{7mm}
\begin{picture}(8,8)
\put(4,1){\circle*{.3}}
\put(1,3){\circle*{.3}}
\put(3,3){\circle*{.3}}
\put(5,3){\circle*{.3}}
\put(7,3){\circle*{.3}}
\put(1,5){\circle*{.3}}
\put(3,5){\circle*{.3}}
\put(5,5){\circle*{.3}}
\put(7,5){\circle*{.3}}
\put(4,7){\circle*{.3}}
\put(4,1){\line(-3,2)3}
\put(4,1){\line(-1,2)1}
\put(4,1){\line(1,2)1}
\put(4,1){\line(3,2)3}
\put(4,7){\line(-3,-2)3}
\put(4,7){\line(-1,-2)1}
\put(4,7){\line(1,-2)1}
\put(4,7){\line(3,-2)3}
\put(1,3){\line(0,1)2}
\put(1,3){\line(1,1)2}
\put(3,3){\line(-1,1)2}
\put(3,3){\line(0,1)2}
\put(5,3){\line(0,1)2}
\put(5,3){\line(1,1)2}
\put(7,3){\line(-1,1)2}
\put(7,3){\line(0,1)2}
\put(3.85,.3){$0$}
\put(.4,2.85){$a$}
\put(3.3,2.85){$b$}
\put(4.4,2.85){$c$}
\put(7.3,2.85){$d$}
\put(.4,4.85){$d'$}
\put(3.3,4.85){$c'$}
\put(4.4,4.85){$b'$}
\put(7.3,4.85){$a'$}
\put(3.85,7.35){$1$}
\put(3.25,-.7){{\rm Fig.\ 3}}
\end{picture}
\]

\vspace*{3mm}

In order to avoid problems with the existence of suprema and infima, we introduce the following concept.

\begin{definition}\label{def1}
A {\em pseudo-orthomodular poset} is a poset $\mathbf P=(P,\leq,{}',0,1)$ with complementation satisfying one of the following equivalent identities:
\begin{align*}
L(U(L(x,y),y'),y) & \approx L(x,y), \\
U(L(U(x,y),y'),y) & \approx U(x,y).
\end{align*}
\end{definition}

That e.g.\ the second identity follows from the first one can be seen by using the De Morgan laws:
\[
U(L(U(x,y),y'),y)\approx(L(U(L(x',y'),y),y'))'\approx(L(x',y'))'\approx U(x,y).
\]
Of course, if $\mathbf P$ is a lattice then these identities are equivalent to the orthomodular laws. Thus every orthomodular lattice is a pseudo-orthomodular poset. The identities mentioned in Definition~\ref{def1} can be weakened to inclusions as the following lemma shows:

\begin{lemma}\label{lem3}
In every poset $(P,\leq,{}',0,1)$ with complementation we have
\begin{align*}
L(U(L(x,y),y'),y) & \supseteq L(x,y), \\
U(L(U(x,y),y'),y) & \supseteq U(x,y)
\end{align*}
for all $x,y\in P$.
\end{lemma}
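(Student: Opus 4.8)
The plan is to prove both inclusions directly and by symmetric, essentially one-line arguments: each follows just from the fact that an element of $U(L(x,y),y')$ is in particular an upper bound of $L(x,y)$, and dually that an element of $L(U(x,y),y')$ is a lower bound of $U(x,y)$. No property of the complementation $'$ is needed here beyond its being a unary operation; in particular the axioms (i)--(iii) play no role, and the reverse inclusions — which together with these would give the defining identities of Definition~\ref{def1} — are simply not available in general.

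For the first inclusion, fix $x,y\in P$ and let $t\in L(x,y)$. Then $t\leq x$ and $t\leq y$; in particular $t\leq y$. Now take any $u\in U(L(x,y),y')$. Since $u$ is an upper bound of $L(x,y)\cup\{y'\}$, it is in particular an upper bound of $L(x,y)$, and as $t\in L(x,y)$ this gives $t\leq u$. Thus $t$ is a lower bound of $U(L(x,y),y')\cup\{y\}$, i.e.\ $t\in L(U(L(x,y),y'),y)$. Since $t$ was arbitrary, $L(x,y)\subseteq L(U(L(x,y),y'),y)$. (Phrased abstractly: $U(L(x,y),y')\subseteq U(L(x,y))$, hence $L(U(L(x,y)))\subseteq L(U(L(x,y),y'))$ by antitonicity of $L$; combined with $L(x,y)\subseteq L(U(L(x,y)))$ and $L(x,y)\subseteq L(y)$ this yields the claim.) The second inclusion is proved in exactly the same way with the roles of $L$ and $U$ interchanged: for $t\in U(x,y)$ one has $t\geq y$, and every $v\in L(U(x,y),y')$ is a lower bound of $U(x,y)$, hence $v\leq t$, so $t\in U(L(U(x,y),y'),y)$. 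Alternatively, since $'$ is an antitone involution on a poset with complementation, the second inclusion can be deduced from the first by replacing $x,y$ with $x',y'$ and applying the De Morgan laws $(L(a,b))'=U(a',b')$, $(U(a,b))'=L(a',b')$ and $(x')'=x$, exactly as in the derivation of the second identity of Definition~\ref{def1} from the first.

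I do not anticipate any real obstacle: the statement is a monotonicity observation about the cone operators. The only points needing a little care are bookkeeping with the notation — recalling that $L(U(L(x,y),y'),y)$ abbreviates $L\bigl(U(L(x,y),y')\cup\{y\}\bigr)$ — and resisting the temptation to claim equality, since the reverse inclusions fail in general (which is precisely why Definition~\ref{def1} is a genuine restriction and this lemma only captures half of it).
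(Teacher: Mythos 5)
Your proof is correct and takes essentially the same approach as the paper: the parenthetical ``abstract phrasing'' you give (antitonicity of $L$ applied to $U(L(x,y),y')\subseteq U(L(x,y))$, together with $L(U(L(x,y)))=L(x,y)$) is precisely the paper's one-line computation, and your element-chasing version is just its unfolding. The dual argument for the second inclusion likewise matches.
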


\begin{proof}
We have
\begin{align*}
L(U(L(x,y),y'),y) & \supseteq L(U(L(x,y)),y)\approx L(U(L(x,y)))\approx L(x,y), \\
U(L(U(x,y),y'),y) & \supseteq U(L(U(x,y)),y)\approx U(L(U(x,y)))\approx U(x,y)
\end{align*}
for all $x,y\in P$.
\end{proof}

We are going to show that the class of pseudo-orthomodular posets is not so small, namely a number of complemented posets turn out to belong to this class.

\begin{definition}
A {\em pseudo-Boolean poset} is a poset $\mathbf P=(P,\leq,{}',0,1)$ with complementation satisfying one of the following equivalent identities:
\begin{align*}
L(U(x,y),y') & \approx L(x,y'), \\
U(L(x,y),y') & \approx U(x,y').
\end{align*}
\end{definition}

That e.g.\ the second identity follows from the first one can be seen by using the De Morgan laws:
\[
U(L(x,y),y')\approx(L(U(x',y'),y))'\approx(L(x',y))'\approx U(x,y').
\]
Of course, every Boolean poset is pseudo-Boolean since in any Boolean poset we have
\[
L(U(x,y),y')\approx L(U(L(x,y'),L(y,y')))\approx L(U(L(x,y'),0))\approx L(U(L(x,y')))\approx L(x,y').
\]

\begin{lemma}\label{lem2}
Every pseudo-Boolean poset and hence also every Boolean poset is pseudo-orthomodular.
\end{lemma}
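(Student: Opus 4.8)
The plan is to derive the first of the two identities listed in Definition~\ref{def1} directly from the pseudo-Boolean identities; the second identity then follows for free by the De~Morgan computation already recorded just after Definition~\ref{def1}, and the assertion about Boolean posets is immediate from the remark made right before the lemma that every Boolean poset is pseudo-Boolean. So I would fix a pseudo-Boolean poset $(P,\leq,{}',0,1)$ and elements $x,y\in P$, and aim to show $L(U(L(x,y),y'),y)\approx L(x,y)$.

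First I would rewrite the inner cone. The term $U(L(x,y),y')$ is \emph{literally} the left-hand side of the second pseudo-Boolean identity $U(L(x,y),y')\approx U(x,y')$ (instantiated at the same $x,y$), so $U(L(x,y),y')=U(x,y')$ and hence $L(U(L(x,y),y'),y)=L(U(x,y'),y)$.

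Next I would invoke the first pseudo-Boolean identity $L(U(x,y),y')\approx L(x,y')$, but with the term $y'$ substituted for the variable $y$; since $(y')'\approx y$ by axiom~(iii) of a poset with complementation, this gives $L(U(x,y'),y)=L(U(x,y'),(y')')=L(x,(y')')=L(x,y)$. Chaining the two displayed equalities yields $L(U(L(x,y),y'),y)=L(x,y)$, which is the first identity of Definition~\ref{def1}; the second then follows by the De~Morgan argument shown after that definition, and the corollary for Boolean posets follows since Boolean $\Rightarrow$ pseudo-Boolean. As a sanity check, Lemma~\ref{lem3} already supplies the inclusion $\supseteq$, so only $\subseteq$ is genuinely new, but the computation above in fact delivers the full equality in one shot.

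There is essentially no serious obstacle: the entire proof rests on recognizing that $U(L(x,y),y')$ matches the shape of the pseudo-Boolean identity verbatim, and the only point needing a moment's attention is that substituting the term $y'$ for the variable $y$ in a valid identity is legitimate and that $(y')'$ collapses to $y$ by involutivity (axiom~(iii)). Once the first identity of Definition~\ref{def1} is established, nothing further is required for the second identity or for the statement about Boolean posets.
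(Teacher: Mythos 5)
Your proof is correct and is essentially identical to the paper's: the authors' one-line computation $L(U(L(x,y),y'),y)\approx L(U(x,y'),y)\approx L(x,y)$ uses exactly your two steps, namely the second pseudo-Boolean identity applied to the inner cone and then the first pseudo-Boolean identity with $y'$ substituted for $y$ together with involutivity. Your write-up merely makes explicit the substitution that the paper leaves implicit.
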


\begin{proof}
In any pseudo-Boolean poset we have
\[
L(U(L(x,y),y'),y)\approx L(U(x,y'),y)\approx L(x,y).
\]
\end{proof}

In the following we consider a construction of pseudo-orthomodular posets via so-called horizontal sums.

Let $\mathbf P_i=(P_i,\leq_i,{}'_i,0,1),i\in I$, be a family of bounded posets of cardinality greater than $2$ with a unary operation satisfying $0'_i=1$ and $1'_i=0$ and assume $P_i\cap P_j=\{0,1\}$ for all $i,j\in I$ with $i\neq j$. Put $P:=\bigcup\limits_{i\in I}P_i$. On $P$ we define a binary relation $\leq$ and a unary operation $'$ as follows:
\begin{align*}
& x\leq y\text{ if there exists some }i\in I\text{ such that }x\leq_iy\text{ in }P_i, \\
& x':=x'_i\text{ if }x\in P_i
\end{align*}
($x,y\in P$). Then $\mathbf P:=(P,\leq,{}',0,1)$ is well-defined and called the {\em horizontal sum } of the $\mathbf P_i,i\in I$. The $\mathbf P_i$ are called the {\em blocks} of $\mathbf P$.

\begin{lemma}\label{lem4}
The horizontal sum of pseudo-orthomodular posets is pseudo-orthomodular.
\end{lemma}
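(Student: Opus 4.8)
The plan is to verify directly that the horizontal sum $\mathbf P=(P,\leq,{}',0,1)$ of pseudo-orthomodular posets $\mathbf P_i$, $i\in I$, satisfies the first identity in Definition~\ref{def1}, namely $L(U(L(x,y),y'),y)\approx L(x,y)$ for all $x,y\in P$; the second identity then follows automatically via the De Morgan argument recorded after Definition~\ref{def1}. By Lemma~\ref{lem3} the inclusion $\supseteq$ holds in every poset with complementation, so only $\subseteq$ needs proof. First I would record the elementary structural facts about the horizontal sum: the only elements comparable to elements of two distinct blocks are $0$ and $1$; consequently, for $a,b\in P$ not both in $\{0,1\}$, if $a,b$ lie in a common block $P_i$ then $L_{\mathbf P}(a,b)=L_{\mathbf P_i}(a,b)$, while if they lie in no common block then $L_{\mathbf P}(a,b)=\{0\}$ (and dually $U_{\mathbf P}(a,b)=\{1\}$). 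Also $x'$ lies in the same block as $x$, so $x$ and $x'$ are always in a common block.

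Next I would fix $x,y\in P$ and split into cases according to where $y$ sits. If $y\in\{0,1\}$ the identity is immediate: for $y=1$ both sides are $L(x)$, using $U(L(x),1)=U(L(x))$; for $y=0$ both sides are $\{0\}$. So assume $y\notin\{0,1\}$, hence $y$ lies in a unique block $P_i$ and $y'\in P_i\setminus\{0,1\}$ as well. Now split on $x$. If $x\in P_i$ too, then $L(x,y)=L_{\mathbf P_i}(x,y)\subseteq P_i$, so $L(x,y)$ together with $y'$ all lie in $P_i$, whence $U(L(x,y),y')=U_{\mathbf P_i}(L_{\mathbf P_i}(x,y),y')\subseteq P_i$, and then $L(U(L(x,y),y'),y)=L_{\mathbf P_i}(U_{\mathbf P_i}(L_{\mathbf P_i}(x,y),y'),y)$, which equals $L_{\mathbf P_i}(x,y)=L(x,y)$ because $\mathbf P_i$ is pseudo-orthomodular. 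The remaining case is $x\notin P_i$ (in particular $x\notin\{0,1\}$, since $0,1\in P_i$): then $x$ and $y$ share no block, so $L(x,y)=\{0\}$, hence $U(L(x,y),y')=U(0,y')=U(y')$, and $L(U(y'),y)=L(y',y)=\{0\}$ by condition (i) of complementation, while $L(x,y)=\{0\}$ as well; so both sides equal $\{0\}$.

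The main obstacle is really just bookkeeping: one must be careful that the computations of $L(\cdot)$ and $U(\cdot)$ performed inside a block $\mathbf P_i$ coincide with those performed in the ambient $\mathbf P$, which is exactly why the blocks are required to have cardinality greater than $2$ (so that the overlap $P_i\cap P_j=\{0,1\}$ really does pin down comparabilities) and why one checks that the relevant sets never ``escape'' the block. Once the case split is set up, each case reduces either to the pseudo-orthomodular law of a single block or to the complementation axiom $L(z,z')=\{0\}$ together with $U(0,z)=U(z)$, so no genuine computation beyond what is already in the excerpt is needed. I would close by remarking that the second identity is obtained from the first by the De Morgan laws exactly as displayed after Definition~\ref{def1}, so the horizontal sum $\mathbf P$ is pseudo-orthomodular.
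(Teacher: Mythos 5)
Your proof is correct and follows essentially the same route as the paper: reduce to the first identity via Lemma~\ref{lem3} and De Morgan, then case-split on $y\in\{0,1\}$ versus $y$ lying in a proper block and invoke the block's pseudo-orthomodularity. Your extra subcase where $x$ and $y$ share no block (giving $L(x,y)=\{0\}$ and $L(U(y'),y)=L(y',y)=\{0\}$) is handled only implicitly in the paper's argument, so your version is if anything slightly more explicit.
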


\begin{proof}
Let $\mathbf P_i=(P_i,\leq_i,{}'_i,0,1),i\in I$, be a family of pseudo-orthomodular posets of cardinality greater than $2$ satisfying $0'_i=1$ and $1'_i=0$ and assume $P_i\cap P_j=\{0,1\}$ for all $i,j\in I$ with $i\neq j$, let $\mathbf P=(P,\leq,{}',0,1)$ denote its horizontal sum and let $a,b\in P$. If $b=0$ then
\[
L(U(L(a,b),b'),b)=L(U(L(a,0),1),0)=\{0\}=L(a,0)=L(a,b).
\]
If $b=1$ then
\begin{align*}
L(U(L(a,b),b'),b) & =L(U(L(a,1),0),1)=L(U(L(a),0),1)=L(U(L(a)),1)= \\
                  & =L(U(L(a)))=L(a)=L(a,1)=L(a,b).
\end{align*}
If, finally, $b\neq0,1$ then there exists some $j\in I$ with $b\in P_j$. We then have
\[
L(a,b),U(L(a,b),b'),L(U(L(a,b),b'),b)\subseteq P_j.
\]
Since $\mathbf P_j$ is pseudo-orthomodular, we conclude $L(U(L(a,b),b'),b)=L(a,b)$. This shows that the identity $L(U(L(x,y),y'),y)\approx L(x,y)$ holds in $\mathbf P$.
\end{proof}

\begin{corollary}\label{cor1}
The horizontal sum of pseudo-Boolean posets is pseudo-orthomodular. Especially, the horizontal sum of Boolean posets is pseudo-orthomodular.
\end{corollary}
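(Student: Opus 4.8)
The plan is to obtain the corollary as an immediate combination of Lemma~\ref{lem2} and Lemma~\ref{lem4}, with essentially no further computation. First I would recall that by Lemma~\ref{lem2} every pseudo-Boolean poset is already pseudo-orthomodular, and, since the remark preceding Lemma~\ref{lem2} shows that every Boolean poset is pseudo-Boolean, every Boolean poset is pseudo-orthomodular as well. Hence a horizontal sum whose blocks are all pseudo-Boolean (respectively Boolean) is, in particular, a horizontal sum of pseudo-orthomodular posets.

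Next I would invoke Lemma~\ref{lem4}: the horizontal sum of pseudo-orthomodular posets is pseudo-orthomodular. Applying it to the family of blocks just described yields at once that the horizontal sum in question is pseudo-orthomodular, which is the first assertion; the second sentence is then the special case in which each block is Boolean. One could alternatively give a self-contained argument imitating the proof of Lemma~\ref{lem4} but using the pseudo-Boolean identity inside each non-degenerate block, yet this is strictly more work and buys nothing.

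The only point that needs a word of care is that the horizontal-sum construction was stated for a family of bounded posets with a unary operation, of cardinality greater than $2$ and satisfying $0'_i=1$ and $1'_i=0$. The cardinality restriction is simply a hypothesis we carry over (it merely excludes the two-element chain as a ``degenerate'' block), while the conditions $0'_i=1$ and $1'_i=0$ are automatic here: in any bounded poset with complementation, antitonicity of $'$ together with the fact that $'$ is a bijection forces $0'$ to be an upper bound of all of $P$, hence $0'=1$, and dually $1'=0$. Thus the hypotheses of Lemma~\ref{lem4} are met and no real obstacle arises; the corollary is genuinely immediate.
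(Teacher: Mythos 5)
Your proof is correct and is exactly the paper's argument: the corollary is obtained by combining Lemma~\ref{lem2} with Lemma~\ref{lem4}. Your extra check that the blocks satisfy the hypotheses of the horizontal-sum construction (in particular that $0'=1$ follows from $0'\in U(0,0')=\{1\}$) is sound, though the paper leaves it implicit.
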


\begin{proof}
This follows from Lemmas~\ref{lem2} and \ref{lem4}.
\end{proof}

A pseudo-orthomodular poset need not be modular or orthomodular as the following example shows:

\begin{example}
Consider the horizontal sum $\mathbf P$ of the poset from {\rm Fig.\ 1 (b)} and an four-element Boolean algebra whose Hasse diagram is depicted in {\rm(Fig.\ 4)}:

\vspace*{3mm}

\[
\setlength{\unitlength}{7mm}
\begin{picture}(18,8)
\put(9,0){\circle*{.3}}
\put(6,2){\circle*{.3}}
\put(8,2){\circle*{.3}}
\put(10,2){\circle*{.3}}
\put(12,2){\circle*{.3}}
\put(6,4){\circle*{.3}}
\put(12,4){\circle*{.3}}
\put(6,6){\circle*{.3}}
\put(8,6){\circle*{.3}}
\put(10,6){\circle*{.3}}
\put(12,6){\circle*{.3}}
\put(9,8){\circle*{.3}}
\put(1,4){\circle*{.3}}
\put(17,4){\circle*{.3}}
\put(9,0){\line(-3,2)3}
\put(9,0){\line(-1,2)1}
\put(9,0){\line(1,2)1}
\put(9,0){\line(3,2)3}
\put(9,8){\line(-3,-2)3}
\put(9,8){\line(-1,-2)1}
\put(9,8){\line(1,-2)1}
\put(9,8){\line(3,-2)3}
\put(6,2){\line(0,1)4}
\put(12,2){\line(0,1)4}
\put(6,4){\line(1,1)2}
\put(6,2){\line(1,1)4}
\put(8,2){\line(1,1)4}
\put(10,2){\line(1,1)2}
\put(8,2){\line(-1,1)2}
\put(10,2){\line(-1,1)4}
\put(12,2){\line(-1,1)4}
\put(12,4){\line(-1,1)2}
\put(1,4){\line(2,-1)8}
\put(1,4){\line(2,1)8}
\put(17,4){\line(-2,-1)8}
\put(17,4){\line(-2,1)8}
\put(8.85,-.75){$0$}
\put(5.4,1.9){$a$}
\put(7.2,1.9){$b$}
\put(10.45,1.9){$c$}
\put(12.4,1.9){$d$}
\put(5.4,3.9){$e$}
\put(.4,3.9){$f$}
\put(12.4,3.9){$e'$}
\put(17.4,3.9){$f'$}
\put(5.3,5.9){$d'$}
\put(7.2,5.9){$c'$}
\put(10.45,5.9){$b'$}
\put(12.4,5.9){$a'$}
\put(8.85,8.4){$1$}
\put(8.2,-1.5){{\rm Fig.\ 4}}
\end{picture}
\]

\vspace*{8mm}

According to Corollary~\ref{cor1}, $\mathbf P$ is pseudo-orthomodular, but neither modular since $b\leq c'$, but
\begin{align*}
L(U(b,f),c') & =L(1,c')=L(c')=\{0,a,b,c\}\neq\{0,b\}=L(b)=L(U(b))= \\
& =L(U(\{0,b\},0))=L(U(L(b,c'),L(f,c'))).
\end{align*}
nor orthomodular since $b\leq c'$, but $b\vee c$ does not exist.
\end{example}

The following theorem describes a connection between pseudo-orthomodular posets and orthomodular posets.

\begin{theorem}
\
\begin{enumerate}
\item[{\rm(i)}] Every orthogonal pseudo-orthomodular poset is orthomodular.
\item[{\rm(ii)}] Every orthogonal modular poset with complementation is orthomodular.
\end{enumerate}
\end{theorem}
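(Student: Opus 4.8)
The plan is to reduce both statements to the orthomodular law in the form $U(L(U(x,y),y'),y)\approx U(x,y)$, using the fact that in an orthogonal poset with complementation the relevant joins and meets actually exist, so that cone identities can be rewritten as lattice-like identities on the elements that matter. Throughout I would fix $x,y\in P$ and abbreviate; the key observation is that since $y\leq y$ we have, in an orthogonal poset, that $L(U(x,y),y')$ is of the form $L(t)$ for a genuine element $t$ whenever $U(x,y)$ is a principal cone $U(s)$ — and in the orthogonal setting one checks that the elements $L(x,y)=L(x\wedge y)$ type reductions are available exactly because $x\wedge y$ need not exist but $y\wedge(\text{something}\geq y')$ does, via the De Morgan computation $y\wedge(s\vee y')=(y'\vee(s'\wedge y))'$ flagged in the paragraph preceding the theorem.

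For part (i), I would argue as follows. Let $\mathbf P$ be orthogonal and pseudo-orthomodular. By the second defining identity of Definition~\ref{def1}, $U(L(U(x,y),y'),y)\approx U(x,y)$. The task is to show the orthomodular law $((x\vee y)\wedge y')\vee y\approx x\vee y$ holds \emph{as an identity between existing elements}. First, because $\mathbf P$ is orthogonal, for any $a\leq b'$ the join $a\vee b$ exists; applying this with suitable substitutions shows $x\vee y$ exists whenever $x\leq y'$, and more importantly that $(x\vee y)\wedge y'$ exists (it is the meet of $y'$ with an element; use $y'\leq y'$ and De Morgan as above). Then I would translate the cone identity: $U(x,y)=U(x\vee y)$, so $L(U(x,y),y')=L(x\vee y,y')=L((x\vee y)\wedge y')$ once that meet is known to exist; hence $U(L(U(x,y),y'),y)=U((x\vee y)\wedge y',y)=U(((x\vee y)\wedge y')\vee y)$, again using existence of this join (which follows since $(x\vee y)\wedge y'\leq y'=(y)'$, triggering orthogonality). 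Equating with $U(x\vee y)$ and reading off the generators gives $((x\vee y)\wedge y')\vee y=x\vee y$, which is one of the orthomodular laws. The only subtlety is to verify each existence claim in the right order so that every rewriting $L(U(\cdot))\to L(\text{meet})$ and $U(L(\cdot))\to U(\text{join})$ is legitimate; I expect this bookkeeping to be the main obstacle, though each individual step is short.

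For part (ii), let $\mathbf P$ be orthogonal and modular with complementation. I would instantiate the modular law $x\leq z\Rightarrow U(x,L(y,z))\approx U(L(U(x,y),z))$ with a clever choice of variables to produce the orthomodular law, in the spirit of the classical lattice proof that modular plus complemented (orthocomplemented) implies orthomodular. The natural substitution is to replace the triple $(x,y,z)$ by $(y, x, y\vee\text{something})$ or more precisely to use $x:=y$, $z:=y\vee x$ (valid when $x\leq y'$, and one first reduces to that case by replacing $x$ with $x\wedge y'$-type terms, noting $x\vee y=(x\wedge y')\vee y$ is exactly what we want to prove), and $y$ in the modular law replaced by $y'$; then $y\leq y\vee x$ gives the hypothesis $x\leq z$, and the conclusion $U(y,L(y',y\vee x))\approx U(L(U(y,y'),y\vee x))=U(L(1,y\vee x))=U(y\vee x)$, while the left side is $U(y,L(y',y\vee x))=U(y\vee((y\vee x)\wedge y'))$ after checking existence of the inner meet and outer join via orthogonality. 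Comparing generators yields $y\vee((y\vee x)\wedge y')=y\vee x$, the orthomodular law. Again the delicate point is managing existence of the joins/meets — this is precisely where orthogonality (rather than mere complementation) is needed — and arranging the initial reduction to the orthogonal case $x\leq y'$ so that the modular law applies cleanly; I would expect that reduction and the existence bookkeeping to be the crux, with the algebra itself being a routine substitution once the framework is set up.
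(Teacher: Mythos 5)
Your computational skeleton is sound and is essentially the order-dual of the paper's argument: the paper fixes $a\leq b$, works with the meet form, and computes
\[
L((a\vee b')\wedge b)=L(U(a,b'),b)=L(U(L(a,b),b'),b)=L(a,b)=L(a)
\]
using pseudo-orthomodularity in (i) and, in (ii), the modular law $L(U(a,b'),b)=L(U(a,L(b',b)))=L(U(a))=L(a)$; you do the same with $U$ and the join form. Your individual rewriting steps ($U(x,y)=U(x\vee y)$, $L(x\vee y,y')=L((x\vee y)\wedge y')$, $U((x\vee y)\wedge y',y)=U(((x\vee y)\wedge y')\vee y)$, and the instantiation of the second modular law at $(y,y',y\vee x)$) are all legitimate, and your existence arguments for the \emph{intermediate} elements via orthogonality and De Morgan are exactly the ones the paper invokes.

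The genuine gap is the role you assign to the hypothesis $x\leq y'$. You restrict the verification of $((x\vee y)\wedge y')\vee y=x\vee y$ to orthogonal pairs and then propose to ``reduce'' the general case by replacing $x$ with $x\wedge y'$ --- a reduction you yourself note presupposes the orthomodular law, so it is circular; and the restricted statement alone is nearly vacuous for the conditional form, since $a\leq b$ together with $b\leq a'$ forces $a\in L(a,a')=\{0\}$. In fact no reduction is needed: the orthomodular law must be verified precisely when $x\vee y$ exists, and in the conditional form one simply takes $y:=a$, $x:=b$ with $a\leq b$, so that $x\vee y=b$ exists trivially; orthogonality is never needed for the existence of $x\vee y$ itself, only for the existence of $(x\vee y)\wedge y'$ (via $(x\vee y)'\leq y'$) and of $((x\vee y)\wedge y')\vee y$ (via $(x\vee y)\wedge y'\leq y'$). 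With that correction every equality you wrote goes through unchanged, and your proof coincides with the paper's up to duality; as written, however, the step you flag as ``the crux'' is left unresolved and the proposed way of resolving it would fail.
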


\begin{proof}
\
\begin{enumerate}
\item[(i)] Let $(P,\leq,{}',0,1)$ be an orthogonal pseudo-orthomodular poset and $a,b\in P$ and assume $a\leq b$. Then $L(a,b)=L(a)$ and $U(a,b')=U(a\vee b')$ and hence
\begin{align*}
L((a\vee b')\wedge b) & =L(a\vee b',b)=L(U(a\vee b'),b)=L(U(a,b'),b)=L(U(L(a),b'),b)= \\
                      & =L(U(L(a,b),b'),b)=L(a,b)=L(a)
\end{align*}
showing $(a\vee b')\wedge b=a$.
\item[(ii)] Let $(P,\leq,{}',0,1)$ be an orthogonal modular poset with complementation and $a,b\in P$ and assume $a\leq b$. Then, due to orthogonality, $a\vee b'$ exists and hence also $(a\vee b')\wedge b$ exists, and, using modularity, we compute
\begin{align*}
L((a\vee b')\wedge b) & =L(a\vee b',b)=L(U(a\vee b'),b)=L(U(a,b'),b)=L(U(a,L(b',b)))= \\
& =L(U(a,0))=L(U(a))=L(a)
\end{align*}
showing $(a\vee b')\wedge b=a$.
\end{enumerate}
\end{proof}

Now we treat a connection between modular posets with complementation and pseudo-orthomodular posets.

\begin{theorem}
A modular poset $(P,\leq,{}',0,1)$ with complementation is pseudo-or\-tho\-mo\-du\-lar if and only if
\[
L(\bigcap_{z\in L(x,y)}(U(z,y')\cup\{y\})\subseteq\bigcup_{z\in L(x,y)}L(U(z,y')\cup\{y\})
\]
for all $x,y\in P$.
\end{theorem}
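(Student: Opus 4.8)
The plan is to unwind the notation on both sides of the displayed inclusion and recognize it as a disguised form of $L(U(L(x,y),y'),y)\subseteq L(x,y)$, which by Lemma~\ref{lem3} is exactly pseudo-orthomodularity. Fix $x,y\in P$ and abbreviate $A:=L(x,y)$; since $0\in A$, the set $A$ is nonempty. Directly from the definition of $U$ one gets $\bigcap_{z\in A}U(z,y')=U(A,y')$, and the elementary identity $\bigcap_{z\in A}(S_z\cup\{y\})=\bigl(\bigcap_{z\in A}S_z\bigr)\cup\{y\}$, valid because $A\neq\emptyset$, gives $\bigcap_{z\in A}(U(z,y')\cup\{y\})=U(A,y')\cup\{y\}$. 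Applying $L$ (and recalling that $L(M,a)$ stands for $L(M\cup\{a\})$) yields that the left-hand side of the displayed inclusion equals $L(U(L(x,y),y'),y)$; note that modularity is not needed for this step.

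Next I would simplify the right-hand side using modularity. Every $z\in L(x,y)$ satisfies $z\leq y$, so the first defining condition of modularity, applied with the variables $z$, $y'$, $y$, gives $L(U(z,y'),y)=L(U(z,L(y',y)))=L(U(z,0))=L(U(z))=L(z)$, where $L(y',y)=\{0\}$ by axiom~(i). Hence $\bigcup_{z\in L(x,y)}L(U(z,y')\cup\{y\})=\bigcup_{z\in L(x,y)}L(z)=L(x,y)$, the last equality because any lower bound of some $z\in L(x,y)$ is itself in $L(x,y)$, while conversely each $w\in L(x,y)$ lies in $L(w)$.

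Combining the two computations, the displayed inclusion holds for all $x,y\in P$ if and only if $L(U(L(x,y),y'),y)\subseteq L(x,y)$ holds for all $x,y\in P$. Lemma~\ref{lem3} supplies the reverse inclusion in any poset with complementation, so this is equivalent to the identity $L(U(L(x,y),y'),y)\approx L(x,y)$, that is, to $\mathbf P$ being pseudo-orthomodular. There is no serious obstacle in the argument; the only points deserving mild care are the extraction of the singleton $\{y\}$ out of the intersection in the first step (legitimate precisely because $L(x,y)$ is nonempty, which is where the bound $0$ is used) and the correct instantiation of the three variables in the modular law in the second step.
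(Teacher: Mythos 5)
Your proof is correct and takes essentially the same route as the paper's: both arguments identify the left-hand side of the displayed inclusion with $L(U(L(x,y),y'),y)$ (modularity not needed there) and the right-hand side with $L(x,y)$ via the modular law applied to $z\leq y'$... more precisely to the triple $z,y',y$ with $z\leq y$, and then invoke Lemma~\ref{lem3}. The only cosmetic difference is that you compute $\bigcap_{z\in L(x,y)}U(z,y')=U(L(x,y),y')$ directly rather than passing through $U\bigl(\bigcup_{z}\{z,y'\}\bigr)$, and you make explicit the (correct) point that extracting $\{y\}$ from the intersection requires $L(x,y)\neq\emptyset$.
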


\begin{proof}
Let $\mathbf P=(P,\leq,{}',0,1)$ be a modular poset with complementation. According to Lemma~\ref{lem3}, $\mathbf P$ is pseudo-orthomodular if and only if $L(U(L(x,y),y'),y)\subseteq L(x,y)$ for all $x,y\in P$. Now we have
\begin{align*}
L(U(L(x,y),y'),y) & \approx L(U(\bigcup_{z\in L(x,y)}\{z,y'\}),y)\approx L(\bigcap_{z\in L(x,y)}U(z,y'),y)\approx \\
                  & \approx L((\bigcap_{z\in L(x,y)}U(z,y'))\cup\{y\})\approx L(\bigcap_{z\in L(x,y)}(U(z,y')\cup\{y\}))
\end{align*}
and
\begin{align*}
L(x,y) & \approx\bigcup_{x\in L(x,y)}L(z)\approx\bigcup_{z\in L(x,y)}L(U(z))\approx\bigcup_{z\in L(x,y)}L(U(z,0))\approx \\
       & \approx\bigcup_{z\in L(x,y)}L(U(z,L(y',y)))\approx\bigcup_{z\in L(x,y)}L(U(z,y'),y)\approx\bigcup_{z\in L(x,y)}L(U(z,y')\cup\{y\}).
\end{align*}
\end{proof}

The next theorem shows that analogous to the corresponding result for orthomodular lattices, pseudo-orthomodular posets can be organized into operator left residuated posets.

\begin{theorem}
Let $(P,\leq,{}',0,1)$ be a pseudo-orthomodular poset and define
\begin{align*}
M(x,y) & :=L(U(x,y'),y), \\
R(x,y) & :=L(U(L(x,y),x'))
\end{align*}
for all $x,y\in P$. Then $\mathbf P=(P,\leq,{}',M,R,0,1)$ is an operator left residuated poset. Moreover, $\mathbf P$ satisfies divisibility if and only if it satisfies the identity
\[
\bigcup_{z\in R(x,y)}L(U(z,x'),x)\approx L(U(R(x,y),x'),x).
\]
\end{theorem}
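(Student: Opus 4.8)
The strategy is to verify the four defining conditions of an operator left residuated poset in turn, using the pseudo-orthomodular law $L(U(L(x,y),y'),y)\approx L(x,y)$ (and its dual) together with the De Morgan laws and the standard identities $L(U(L(A)))\approx L(A)$, $U(L(U(A)))\approx U(A)$. Condition \eqref{equ1}: compute $M(x,1)\approx L(U(x,1'),1)\approx L(U(x,0),1)\approx L(U(x),1)\approx L(U(x))\approx L(x)$, and $M(1,x)\approx L(U(1,x'),x)\approx L(1,x)\approx L(x)$. Condition (iv): $R(x,0)\approx L(U(L(x,0),x'))\approx L(U(0,x'))\approx L(U(x'))\approx L(x')$. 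Condition \eqref{equ2}: if $x\leq y$ then $L(x,y)\approx L(x)$, so $R(x,y)\approx L(U(L(x),x'))\approx L(U(x,x'))\approx L(1)\approx P$; conversely, if $R(x,y)=P$ then $U(L(x,y),x')\approx\{1\}$, and taking $L(U(\cdot))$ of the lower bounds of $\{x',1\}$ together with the pseudo-orthomodular law applied with roles $x\leftrightarrow y$, $y\leftrightarrow x'$ should force $L(x)\subseteq L(y)$, i.e.\ $x\leq y$; here one writes $x\in L(x)\approx L(U(L(y,x),x'),x')$ — wait, the cleanest route is: from $U(L(x,y),x')=\{1\}$ deduce $L(x,y)\subseteq L(U(L(x,y),x'),x')\approx L(1,x')\approx L(x')$, hence every lower bound of $\{x,y\}$ lies in $L(x')$; then using $L(x,x')=\{0\}$ and the pseudo-orthomodular identity $L(U(L(x,y),y'),y)\approx L(x,y)$ one concludes $L(x,y)\approx L(x)$, giving $x\leq y$. (This last implication is where the pseudo-orthomodular law is genuinely needed.)

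For condition \eqref{equ3}, the key left-adjointness equivalence, suppose first $M(x,y)\subseteq L(z)$, i.e.\ $L(U(x,y'),y)\subseteq L(z)$. Then
\begin{align*}
L(x) &\approx L(U(L(x,y),y'),y)\cap L(x)\\
&\subseteq \text{(using }L(x,y)\subseteq L(U(x,y'),y)\subseteq L(z)\text{)}
\end{align*}
— more precisely one shows $L(x)\subseteq L(U(L(x,y),y'))$ because $L(x,y)\subseteq L(U(x,y'),y)\subseteq L(z)$ needs to be combined with $x'\in$ the relevant set; the clean computation is $R(y,z)\approx L(U(L(y,z),y'))\supseteq L(U(L(x,y),y'))$ once $L(x,y)\subseteq L(z)$, and then $L(x)\subseteq L(U(L(x,y),y'))$ holds by Lemma~\ref{lem3} applied with roles $x$, $y$ swapped, so that $L(x)\subseteq R(y,z)$. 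Conversely, if $L(x)\subseteq R(y,z)=L(U(L(y,z),y'))$, then intersecting with $L(y)$ and using the pseudo-orthomodular law $L(U(L(y,z),y'),y)\approx L(y,z)$ gives $L(x,y)\subseteq L(y,z)\subseteq L(z)$; since $U(x,y')\subseteq U(x)$ forces $L(U(x,y'),y)\subseteq L(x,y)$, we obtain $M(x,y)=L(U(x,y'),y)\subseteq L(x,y)\subseteq L(z)$. Thus \eqref{equ3} holds.

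Finally, for the divisibility clause, one simply expands both sides of \eqref{equ5}: $M(R(x,y),x)\approx\bigcup_{z\in R(x,y)}M(z,x)\approx\bigcup_{z\in R(x,y)}L(U(z,x'),x)$, which by the extension convention also equals $L(U(R(x,y),x'),x)$ only after pulling the union through $L\circ U$ — and this is exactly the stated identity, since $\bigcup_z L(U(z,x'),x)$ need not coincide with $L(\bigcap_z U(z,x'),x)=L(U(R(x,y),x'),x)$ in general. So divisibility \eqref{equ5} holds if and only if $\bigcup_{z\in R(x,y)}L(U(z,x'),x)\approx L(x,y)$ and $L(U(R(x,y),x'),x)\approx L(x,y)$ agree; the second of these is automatic because $R(x,y)=L(U(L(x,y),x'))$ and the pseudo-orthomodular law (with $x$, $y$ interchanged) gives $L(U(R(x,y),x'),x)\approx L(U(L(U(L(x,y),x')),x'),x)\approx L(U(L(x,y),x'),x)\approx L(x,y)$ after simplification — no wait, one must be careful that $R(x,y)$ is a down-set and $U$ of it collapses; granting that, divisibility reduces precisely to the displayed identity. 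The main obstacle I expect is the converse direction of \eqref{equ2} and the "only if" half of \eqref{equ3}: both require a nontrivial application of the pseudo-orthomodular identity in the form $L(U(L(u,v),v'),v)\approx L(u,v)$ with a careful choice of which variable plays the role of $v$, rather than mere manipulation of the $L,U$ calculus, and keeping track of the swapped roles is the delicate point.
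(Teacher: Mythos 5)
Your overall architecture (checking the four defining conditions and then reducing divisibility to the displayed identity) is the same as the paper's, and your treatment of $M(x,1)\approx M(1,x)\approx L(x)$, of $R(x,0)\approx L(x')$, of the forward half of condition (\ref{equ2}), and of the divisibility equivalence is essentially correct. However, the three genuinely hard steps are broken. For the converse of (\ref{equ2}) you claim $L(x,y)\subseteq L(U(L(x,y),x'),x')\approx L(x')$; since also $L(x,y)\subseteq L(x)$ and $L(x,x')=\{0\}$, that inclusion would force $L(x,y)=\{0\}$, which cannot give $x\leq y$ --- and indeed the inclusion is false (only $L(x,y)\subseteq L(U(L(x,y),x'))$ holds, without the extra $x'$ inside the outer $L$). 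The paper's argument is different and much shorter: it puts $x$, not $x'$, in the last slot and uses the pseudo-orthomodular law directly, $x\in L(x)=L(1,x)=L(U(L(y,x),x'),x)=L(y,x)$.

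Both halves of the adjointness condition (\ref{equ3}) fail as written. In the forward half your key inclusion $L(x)\subseteq L(U(L(x,y),y'))$ is false in general: in the horizontal sum of two four-element Boolean algebras, with $x=a$ and $y=b$ in different blocks, $L(U(L(a,b),b'))=L(U(0,b'))=L(b')$, which does not contain $a$. Lemma~\ref{lem3} only gives the weaker $L(x,y)\subseteq L(U(L(x,y),y'),y)$, which is not enough. The paper instead applies the \emph{dual} pseudo-orthomodular identity to rewrite $U(x,y')$ as $U(L(U(x,y'),y),y')=U(M(x,y),y')$ and only then substitutes the hypothesis $M(x,y)\subseteq L(z)$; this rewriting is the idea your proof is missing. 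In the converse half you assert $L(U(x,y'),y)\subseteq L(x,y)$ ``since $U(x,y')\subseteq U(x)$'', but antitonicity gives the opposite: $U(x,y')\subseteq U(x)$ yields $L(x)\subseteq L(U(x,y'))$, hence $L(x,y)\subseteq M(x,y)$ (strictly, in the same example $M(a,b)=L(b)\supsetneq\{0\}=L(a,b)$), so $M(x,y)\subseteq L(z)$ does not follow from $L(x,y)\subseteq L(z)$. The paper instead writes $U(x,y')=U(y',L(x))$, substitutes $L(x)\subseteq R(y,z)$ inside this expression, and collapses the result using $U(L(U(A)))=U(A)$ together with the pseudo-orthomodular law.
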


\begin{proof}
Let $a,b,c\in P$. \\
(1) We have
\begin{align*}
M(x,1) & \approx L(U(x,0),1)\approx L(U(x))\approx L(x), \\
M(1,x) & \approx L(U(1,x'),x)=L(1,x)=L(x).
\end{align*}
(2) If $a\leq b$ then
\[
R(a,b)=L(U(L(a,b),a'))=L(U(L(a),a'))=L(U(a,a'))=L(1)=P.
\]
Conversely, if $R(a,b)=P$ then $U(L(a,b),a')=\{1\}$ and hence
\[
a\in L(a)=L(1,a)=L(U(L(b,a),a'),a)=L(b,a),
\]
i.e.\ $a\leq b$. \\
(3) If $M(a,b)\subseteq L(c)$, i.e.\ $L(U(a,b'),b)\subseteq L(c)$, then
\begin{align*}
L(a) & =L(U(a))\subseteq L(U(a,b'))=L(U(L(U(a,b'),b),b'))= \\
     & =L(U(L(b)\cap L(U(a,b'),b),b'))=L(U(L(b)\cap L(U(L(U(a,b'),b))),b'))= \\
		 & =L(U(L(b,U(L(U(a,b'),b))),b'))\subseteq L(U(L(b,U(L(c))),b'))= \\
     & =L(U(L(b,U(c)),b'))=L(U(L(b,c),b'))=R(b,c).
\end{align*}
Conversely, if $L(a)\subseteq R(b,c)$, i.e.\ $L(a)\subseteq L(U(L(b,c),b'))$, then
\begin{align*}
M(a,b) & =L(U(a,b'),b)=L(U(b',L(a)),b)\subseteq L(U(b',L(U(L(c,b),b'))),b)= \\
       & =L(U(b')\cap U(L(U(L(c,b),b'))),b)=L(U(b')\cap U(L(c,b),b'),b)= \\
       & =L(U(L(c,b),b'),b)=L(c,b)\subseteq L(c).
\end{align*}
(4) We have
\[
R(a,0)=L(U(L(a,0),a'))=L(U(0,a'))=L(U(a'))=L(a').
\]
(5) We have
\[
M(R(x,y),x)\approx\bigcup_{z\in R(x,y)}M(z,x)\approx\bigcup_{z\in R(x,y)}L(U(z,x'),x)
\]
and
\begin{align*}
L(x,y) & \approx L(U(L(x,y),x'),x)\approx L(U(L(U(L(x,y),x'))),x)\approx \\
& \approx L(U(L(U(L(x,y),x')),x'),x)\approx L(U(R(x,y),x'),x).
\end{align*}
\end{proof}

As shown in the paper, operator residuation can be useful if various posets are considered instead of lattices. Since several non-classical logics are based on underlying posets which need not be lattices, the question is how the operators $M(x,y)$ and $R(x,y)$ can be applied in the axiomatization of these logics. In particular, it can be of some interest how the logic of quantum mechanicas is related to pseudo-orthomodular posets. This particular question is connected with posets of certain self-adjoint operators in a Hilbert space. Up to now, we do not know any answers to these questions, but we have a strong believe that they will be a topic for the next study by the authors and possibly other interested researchers.

Authors' addresses:

Ivan Chajda \\
Palack\'y University Olomouc \\
Faculty of Science \\
Department of Algebra and Geometry \\
17.\ listopadu 12 \\
771 46 Olomouc \\
Czech Republic \\
ivan.chajda@upol.cz

Helmut L\"anger \\
TU Wien \\
Faculty of Mathematics and Geoinformation \\
Institute of Discrete Mathematics and Geometry \\
Wiedner Hauptstra\ss e 8-10 \\
1040 Vienna \\
Austria, and \\
Palack\'y University Olomouc \\
Faculty of Science \\
Department of Algebra and Geometry \\
17.\ listopadu 12 \\
771 46 Olomouc \\
Czech Republic \\
helmut.laenger@tuwien.ac.at
\end{document}